\documentclass[12pt,letterpaper]{amsart}

\usepackage{epsfig,amsmath,amsthm,amssymb,graphicx}
\usepackage[top=1in, bottom=1in, left=1in, right=1in]{geometry}
\usepackage{color}

\usepackage[dvipsnames]{xcolor}

\usepackage{faktor}

\usepackage{tikz}
\usetikzlibrary{matrix, arrows, cd}
\usepackage{pb-diagram}

\definecolor{myOlive}{rgb}{.29,.28,.16}
\definecolor{myRed}{rgb}{.78,0,0}

\makeatletter
\newtheorem*{rep@theorem}{\rep@title}
\newcommand{\newreptheorem}[2]{%
\newenvironment{rep#1}[1]{%
 \def\rep@title{#2 \ref{##1}}%
 \begin{rep@theorem}}%
 {\end{rep@theorem}}}
\makeatother
	
\newtheorem{proposition}{Proposition}[section]
\newtheorem{theorem}[proposition]{Theorem}

\newtheorem*{theorem*}{Theorem}
\newtheorem*{proposition*}{Proposition}
\newtheorem*{lemma*}{Lemma}
\newtheorem*{corollary*}{Corollary}

\theoremstyle{definition}

\newtheorem{question}[proposition]{Question}

\newreptheorem{theorem}{Theorem}
\newreptheorem{proposition}{Proposition}

\theoremstyle{remark}
\newtheorem{remark}[proposition]{Remark}

\newcommand{\bdry}{\partial}

\newcommand{\Z}{\mathbb{Z}}

\newcommand{\C}{\mathcal{C}}

\renewcommand{\int}{\operatorname{Int}}

\newcommand{\Prod}{\displaystyle \prod  }

\frenchspacing

\begin{document}
\title[Concordance and crossing changes]{Concordance, crossing changes, and knots in homology spheres}

\author{Christopher W.\ Davis}
\address{Department of Mathematics, University of Wisconsin--Eau Claire}
\email{daviscw@uwec.edu}
\urladdr{www.uwec.edu/daviscw}
\date{\today}

\subjclass[2010]{57M25}

\begin{abstract}Any knot in $S^3$ may be reduced to a slice knot by  crossing changes.  Indeed, this slice knot can be taken to be the unknot. In this paper we study the question of when the same holds for knots in homology spheres.  We show that a knot in a homology sphere is nullhomotopic in a smooth homology ball if and only if that knot is smoothly concordant to a knot which is homotopic to a smoothly slice knot.  As a consequence, we prove that the equivalence relation on knots in homology spheres given by cobounding immersed annuli in a homology cobordism is generated by concordance in homology cobordisms together with homotopy in a homology sphere.  
\end{abstract}

\maketitle
\section{Introduction and statement of results}

Classically, a knot $K$ is an isotopy class of smooth embeddings of the circle, $S^1$, into the 3-sphere, $S^3$.   A knot is called \textbf{slice} if it forms the boundary of a smoothly embedded 2-disk $D$ in the 4-ball.  This disk $D$ is called a slice disk for $K$.   This notion was first considered by Fox and Milnor in 1957 in the study of singularities of surfaces in 4-manifolds \cite{FoMi57, FoMi}. The question of which knots are slice  is closely related to local obstructions arising in a surgery-theoretic attempt to classify 4-manifolds \cite{CF}.  Since then the question of what knots admit slice disks has been at the heart of the study of 4-manifold topology.  

While not every knot in $S^3$ is slice, every knot can be transformed into a slice knot by a finite sequence of crossing changes.  Indeed that slice knot can be taken to be the unknot.  In the case of knots in a non-simply connected homology sphere the situation is more subtle.  A knot representing a nontrivial class in the fundamental group cannot be reduced to the unknot by any sequence of crossing changes.  The main goal of this paper is to ask when a knot in a homology sphere can be homotoped to a new knot which bounds a smoothly embedded disk in homology ball.  We will consider the following question.

\begin{question}\label{quest:main}
Let $Y$ be a homology sphere and $K$ be a knot in $Y$.   Does there exist a homotopy transforming $K$ to a new knot $K'$ in $Y$ which bounds a smoothly embedded disk in a smooth homology ball bounded by $Y$?
\end{question}


If one allows for topological homology balls and locally flat embedded disks then the answer to this question is affirmative for all knots.  In \cite{AR99}, Austin and Rolfsen prove that any knot in a homology sphere admits a homotopy to a knot with Alexander polynomial 1.  By work of Freedman-Quinn \cite[Theorem 11.7B]{FQ} such a knot bounds a locally flat embedded disk in a contractible 4-manifold.  

In the smooth setting there are obstructions to Question~\ref{quest:main} having an affirmative answer.  First, not every homology sphere bounds a homology ball.   For example, $Y$ might be the Poincar\'e homology sphere or any other homology sphere with nonzero Rohlin invariant.  See \cite[Definition 5.7.16]{KirbyCalculus} for a brief discussion of the Rohlin invariant.  Secondly, if $K$ is homotopic in $Y$ to a knot which bounds an embedded disk in a homology ball, then $K$ is nullhomotopic in that homology ball.  By work of Daemi \cite[Remark 1.6]{Daemi2018} there exist a knot $K$ in a homology sphere $Y$ such that $Y$ bounds a homology ball and yet $K$ is not nullhomotopic in any homology ball bounded by $Y$.  Such a knot cannot be homotopic to a knot which bounds a smoothly embedded disk.

We give a name to the property of bounding a smoothly embedded disk in a homology ball.  Let $Y$ be a homology sphere and $K$ be a knot in $Y$.  We say that $(Y,K)$ is \textbf{homology slice} if there exists a smooth homology ball bounded by $Y$ in which $K$ bounds a smoothly embedded disk.  Thus, Question~\ref{quest:main} asks whether the homotopy class of $K$ in $Y$ contains a homology slice representative.

The notions of sliceness and homology sliceness extend  to equivalence relations.  Two knots $K$ and $J$ in $S^3$ are called \textbf{concordant} if $K\times\{1\}\subseteq S^3\times\{1\}$ and $J\times\{0\}\subseteq S^3\times\{0\}$ cobound a smoothly embedded annulus in $S^3\times[0,1]$.  We call knots $K$ and $J$  in homology spheres $Y$ and $X$ \textbf{homology concordant} if there exists a smooth homology cobordism from $Y$ to $X$ in which $K\subseteq Y$ and $J\subseteq X$ cobound a smoothly  embedded annulus.  Similar to the relationship between classical concordance and slice knots, the homology concordance class containing the unknot in $S^3$ is precisely the set of homology slice knots.  Importantly, homology concordance allows one to compare knots which do not lie in the same 3-manifold.  The quotient of the set of knots in homology spheres by homology concordance is the topic of study of \cite{Davis2018,DR2016,  HLL2018, Levine2016} amongst others.  

Our first main result says that every knot in the boundary of a contractible 4-manifold is homology concordant to a knot for which the answer to Question~\ref{quest:main} is affirmative.

\begin{theorem}\label{thm:smooth}
Let $Y$ be a homology sphere which bounds a smooth contractible 4-manifold.  Let $K$ be a knot in $Y$.  There exists a knot $K'$ in a homology sphere $Y'$ such that $(Y,K)$ is homology concordant to $(Y',K')$ and $K'$ is  homotopic in $Y'$ to a third knot $K''$ which bounds a smoothly embedded disk in a smooth contractible 4-manifold bounded by $Y'$.  
\end{theorem}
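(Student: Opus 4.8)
The plan is to build the required concordance and homotopy out of a single immersed disk supplied by the hypothesis that $W$ is contractible. Pushing $K$ slightly into $W$ and using that $\pi_1(W)=1$, the knot $K$ is nullhomotopic in $W$, so it bounds a smooth map of a disk; after a small perturbation I would take this to be a generic immersion $\Delta$ whose only singularities are finitely many transverse double points $p_1,\dots,p_n$. Since $\Delta$ is a genuine embedded disk away from the $p_i$, these double points are the sole obstruction to $(Y,K)$ being homology slice, and the whole argument reduces to removing them one at a time. I expect to induct on $n$, so the core of the proof is the case of a single double point.

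For a single double point $p$, I would analyze the standard local picture: the boundary of a small $4$--ball about $p$ meets $\Delta$ in a Hopf link, and the two sheets through $p$ run out toward the boundary. The two components of this Hopf link are geometrically linked, so they cannot bound disjoint embedded disks in any homology ball; this is exactly why $p$ cannot simply be capped off and why a change of knot type is forced. The mechanism I want to exploit is the classical dictionary between a transverse double point of a disk and a crossing change of its boundary: switching the crossing of the two sheets unlinks them and converts $\Delta$ near $p$ into an embedded piece. When the two sheets emanate from $p$ out to $Y$ as an honest self--crossing of $K$, changing that crossing is a homotopy of $K$ in $Y$, and it removes $p$ outright; this is the homotopy that will eventually produce $K''$.

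The difficulty --- and the reason the theorem is phrased up to homology concordance rather than as a statement purely about homotopy in $Y$ --- is that the two sheets at $p$ need not close up to a self--crossing of $K$ visible in $Y$; their linking may be realized only through the interior of $W$. In that case I would first excise balls about the double points to expose $\Delta$ as an embedded surface $\Delta_0$ in $W_0 := W \setminus \bigsqcup_i B_i$, which is a homology cobordism from $Y$ to a disjoint union of $3$--spheres precisely because $H_*(W)=H_*(\mathrm{pt})$, and then reglue modified pieces along curves that are nullhomotopic in $W$. The role of contractibility is twofold: every relevant curve is nullhomotopic, so the surgered fillings remain simply connected, hence contractible as simply connected homology balls; and every relevant class is nullhomologous, so the traces are honest homology cobordisms rather than mere cobordisms. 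Arranging these surgeries so that their trace restricts to an embedded annulus on $K$ yields the homology concordance $(Y,K)\sim(Y',K')$, after which each remaining double point has been relocated to a self--crossing of $K'$ in $Y'$.

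Assembling the pieces, the surgeries furnish the homology concordance to $(Y',K')$, the crossing changes furnish the homotopy from $K'$ to $K''$, and what is left of $\Delta$ is an embedded disk bounded by $K''$ in a contractible filling of $Y'$, so $(Y',K'')$ is homology slice as required. The main obstacle I anticipate is exactly the bookkeeping at the double points: one must arrange the resolution of each $p_i$ to factor cleanly as a homology cobordism followed by a crossing change, keep the genus of the trace surface equal to zero so that it is a genuine concordance rather than a higher--genus cobordism, and simultaneously control both $H_*$ and $\pi_1$ of the surgered $4$--manifold so that the final filling is truly contractible. Contractibility of $W$ is what makes all of this possible, and it is precisely the hypothesis that fails for the homology spheres arising in Daemi's examples, consistent with the impossibility of an affirmative answer to Question~\ref{quest:main} there.
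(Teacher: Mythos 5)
Your proposal has a genuine gap at its central step, and it is exactly the step that carries the entire content of the theorem. After correctly observing that an interior double point $p$ of the immersed disk $\Delta$ cannot simply be capped off (the Hopf link in the boundary of a ball around $p$ cannot bound disjoint embedded disks in any homology ball), you propose to excise balls around the double points and then ``reglue modified pieces along curves that are nullhomotopic in $W$,'' arranged so that ``their trace restricts to an embedded annulus on $K$,'' after which ``each remaining double point has been relocated to a self-crossing of $K'$ in $Y'$.'' No mechanism is given for any of this: which curves are surgered, what is glued back, why the result of the surgeries contains an embedded annulus from $K$ to some $K'$, and---most importantly---why the double points, whose two sheets are joined to $K$ only through the interior of $W$, reappear afterwards as self-crossings of $K'$ visible in the boundary $Y'$. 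This is not a technical detail to be checked; it is a restatement of the theorem. In the special case $W=B^4$ this relocation of double points is precisely the theorem of Strle and Owens \cite{StrleOwens2015} cited in the remark after Theorem~\ref{thm:smoothII}, and even there it requires a real argument; the paper explicitly notes that extending it to homology balls requires combining it with the techniques of this paper, not an elementary perturbation/surgery argument. A secondary structural problem: your plan to induct on the number of double points would, even if each step worked, produce an alternating chain of concordances and homotopies (concordance, crossing change, concordance, crossing change, \dots), whereas the theorem requires a single homology concordance followed by a single homotopy in a fixed $Y'$; collapsing such a chain into that normal form is not automatic (it is essentially the content of Theorem~\ref{thm:relative}, whose proof in the paper depends on Theorem~\ref{thm:smoothII}).

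For contrast, the paper's proof never works with an immersed disk at all. It manipulates a handle decomposition of $W$: Proposition~\ref{prop:decompose} splits $W=W'\cup_{Y'}V$, where $(V,Y)$ has only relative 1- and 2-handles and $W'$ is a contractible 4-manifold with no 3-handles (achieving this splitting while keeping $W'$ contractible is the technical heart, done by realizing free generators of $\pi_1(W^{(1)})$ by knots built from 2-handle attaching circles and introducing cancelling 2-/3-handle pairs). The homology concordance is then nearly trivial: $K\times[0,1]$ inside $Y\times[0,1]\subseteq V$, pushed off the attaching regions. The homotopy and the embedded disk come from Proposition~\ref{prop:no 3-handles} applied to $(W',Y')$: since $\ker(\pi_1(Y')\to\pi_1(W'))=\pi_1(Y')$ is normally generated by the 2-handle attaching circles, the class of $K'$ is represented by a knot $K''$ obtained by banding together pushoffs of those circles, and $K''$ visibly bounds an embedded disk made of pushoffs of 2-handle cores. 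If you want to pursue your immersed-disk strategy, the place where it must acquire real content is precisely where the paper substitutes handle theory for the missing surgery argument.
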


Said another way, if we let $\simeq_h^3$ and $\simeq_c$ denote  homotopy in a homology sphere and homology concordance respectively and we let $U$ denote the unknot in $S^3$ then Theorem~\ref{thm:smooth} concludes
$$
(Y,K)\simeq_c (Y',K')\simeq_h^3 (Y',K'')\simeq_c (S^3, U).
$$ 
Thus, the equivalence relation generated by homology concordance and homotopy equates every knot in the boundary of a contractible 4-manifold with the unknot.  

If one allows homology spheres which bound homology balls but not contractible 4-manifolds then there is an obstruction to a knot being related to a homology slice knot by a sequence of homotopies and homology concordances.  Indeed, by \cite[Remark 1.6]{Daemi2018} there exist knots in homology spheres which are not nullhomotopic in any homology ball.  Such a knot cannot be reduced to a smoothly slice knot by any sequence of homotopies and homology concordances.  The following theorem reveals that this is the only obstruction.  

\begin{theorem}\label{thm:smoothII}
Let $Y$ be a homology sphere which bounds a smooth homology ball $W$.  Let $K$ be a knot in $Y$ which is nullhomotopic in $W$.  There exists a knot $K'$ in a homology sphere $Y'$ such that $(Y,K)$ is homology concordant to $(Y',K')$ and $K'$ is homotopic in $Y$ to a third knot $K''$ which bounds a smoothly embedded disk in a smooth homology ball bounded by $Y'$.  
\end{theorem}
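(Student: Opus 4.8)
The plan is to exploit the nullhomotopy hypothesis to produce an immersed disk in $W$ and then to delete its double points one at a time, paying for each deletion with a homology cobordism together with a crossing change in the boundary. This runs parallel to the contractible case of Theorem~\ref{thm:smooth}, with the assumption that $K$ is nullhomotopic in $W$ playing the role that simple connectivity played there; since $W$ is now only assumed to be a homology ball, the slice complement we end with will be a homology ball rather than a contractible manifold, which is exactly why the present conclusion is correspondingly weaker.

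First I would note that, because $K$ is nullhomotopic in $W$, it bounds a smooth map of a disk into $W$, and after a small perturbation this map is a generic immersion whose only singularities are finitely many transverse double points $p_1,\dots,p_n$ in the interior of $W$ (triple points are excluded by dimension). Call the image $\Delta$. The reason a plain homotopy of $K$ in $Y$ cannot finish the job is that a single transverse double point cannot be removed by a homotopy supported near $p_i$: the two local sheets meet the boundary of a small ball about $p_i$ in a Hopf link, whose linking number $\pm 1$ equals the algebraic self-intersection and is invariant under homotopies fixing that boundary. Thus the double points genuinely obstruct making $\Delta$ embedded inside $W$, and one is forced to transport each obstruction out to the boundary.

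The heart of the argument is therefore a local surgical construction that trades an interior double point for a crossing change on the boundary knot. Near $p_i$ I would excise a standard ball meeting $\Delta$ in a cone on the Hopf link and reglue a model piece, produced by blowing up at $p_i$ and then replacing a neighborhood of the exceptional sphere by a homology ball in which the relevant link bounds embedded disks, whose net effect is twofold: it makes $\Delta$ embedded near $p_i$, and it alters the boundary $3$--manifold and knot by an operation realized through a homology cobordism. Performing this for every $p_i$ and taking the trace of all the modifications yields a smooth homology cobordism $C$ from $Y$ to a homology sphere $Y'$, inside which $K$ and a knot $K'\subset Y'$ cobound a smoothly embedded annulus, so that $(Y,K)\simeq_c(Y',K')$. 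The double points, now relocated to the boundary, appear as self-crossings of the knot in $Y'$, so a finite sequence of crossing changes, i.e.\ a homotopy in $Y'$, carries $K'$ to a knot $K''$; and the surface $\Delta$, having been made embedded, furnishes a smoothly embedded disk for $K''$ in the homology ball $W'$ obtained from $W$ by the same local replacements.

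The step I expect to be the main obstacle is controlling homology throughout this trade. A single handle attachment or a single blow-up changes $H_2$ and the intersection form, so the delicate point is to arrange the local model, the blow-up together with the homology-ball replacement of the exceptional neighborhood, so that the total cobordism $C$ introduces no homology, i.e.\ is a genuine homology cobordism with $Y'$ again a homology sphere and $W'$ again a homology ball. A secondary technical issue is to verify that after the relocation the resulting disk for $K''$ is honestly embedded and not merely immersed, which amounts to checking that the crossing changes absorbed precisely the double points and created no new intersections. Once this homological bookkeeping is in place, assembling the homology concordance, the boundary homotopy, and the embedded slice disk produces the chain $(Y,K)\simeq_c(Y',K')\simeq_h^3(Y',K'')$ with $(Y',K'')$ homology slice, as required.
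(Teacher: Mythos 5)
There is a genuine gap, and it is fatal to the approach: the local model at the heart of your argument does not exist. Both versions of your excise-and-reglue move reduce to the same demand. If you excise a standard ball about $p_i$, its boundary $S^3$ meets $\Delta$ in a Hopf link; if you first blow up and then excise a neighborhood $N(E)$ of the exceptional sphere, then $\partial N(E)\cong S^3$ meets the proper transform in the boundaries of two disk fibers, which again form a Hopf link. Either way you must glue in a smooth $4$--manifold $Z$ with $\partial Z = S^3$ having the homology of a ball, in which this Hopf link bounds two \emph{disjoint} embedded disks. No such $Z$ exists: if $F_1,F_2$ are disjoint properly embedded oriented surfaces in a homology ball $Z$ with $\partial F_i = L_i\subseteq \partial Z = S^3$, then capping $F_2$ with a pushed-in Seifert surface $\Sigma_2$ for $L_2$ gives a closed surface which is nullhomologous in $Z$ (as $H_2(Z)=0$), so
$$
0 \;=\; F_1\cdot\bigl(F_2\cup(-\Sigma_2)\bigr) \;=\; F_1\cdot F_2 \;-\; F_1\cdot \Sigma_2 \;=\; 0-\operatorname{lk}(L_1,L_2),
$$
forcing $\operatorname{lk}(L_1,L_2)=0$, whereas the Hopf link has linking number $\pm1$. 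Note that this is precisely the invariant you invoke in your second paragraph to explain why a homotopy cannot kill the double point; the same invariant obstructs \emph{any} homology-preserving local replacement, not just homotopies. So the issue you flagged as "homological bookkeeping" is not a technicality to be arranged with care --- it cannot be arranged at all. (This is exactly why blow-up succeeds in removing double points: $N(E)$ carries nontrivial $H_2$, and that homology is unavoidable.)

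This is also why the paper's proof is global rather than local: it never attempts to embed, or even modify, an immersed disk coming from the nullhomotopy. Instead it splits $W=W'\cup_{Y'}V$ by handle-theoretic means (Proposition~\ref{prop:decompose}), with $V$ a homology cobordism from $Y$ to $Y'$ built with only relative $1$-- and $2$--handles and $(W',Y')$ having no relative $1$--handles; $K$ is pushed through $V$ by a product concordance, the nullhomotopy hypothesis is used purely group-theoretically to write the resulting class as a product of conjugates of attaching circles of the $2$--handles of $(W',Y')$ with conjugating arcs in $Y'$, and then Proposition~\ref{prop:no 3-handles} manufactures the knot $K''$ and its embedded disk out of pushoffs of cores of $2$--handles; a final correction by meridians (banding the concordance annulus with a disk supplied by Proposition~\ref{prop:no 3-handles} applied to the annulus complement) arranges that the homotopy from $K'$ to $K''$ takes place in $Y'$. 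If you want a proof in the spirit of yours, the viable version is the one of Strle and Owens cited in the paper's Remark 1.4 \cite{StrleOwens2015}: there the double points are pushed into a boundary collar, where they become crossing changes, rather than killed in the interior; extending that normal-form argument from $B^4$ to homology balls is nontrivial (there is no radial structure), and supplying a substitute for it is essentially what the paper's handle decomposition accomplishes.
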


\begin{remark}
In \cite[Proposition 2.1]{StrleOwens2015} Strle and Owens prove a quantitative version of Theorem~\ref{thm:smooth} for knots in $S^3$.  They show that if a knot in $S^3$ bounds an immersed disk in the 4-ball which has $n$ self-intersections, then $K$ is concordant to another knot $K'$ (in $S^3$) which can be reduced to a slice knot after $n$ crossing changes.  While we will not do so in this document, by marrying their techniques with ours we expect the same can be proven of knots in homology spheres.  
\end{remark}

Theorems~\ref{thm:smooth} and \ref{thm:smoothII} both conclude that a knot is homology concordant to a knot for which the answer to Question~\ref{quest:main} is affirmative.  In the case of knots which are nullhomotopic in a homology ball admitting no 1-handles, work of Kojima \cite{Kojima79} implies that we can do away with the concordance, answering Question~\ref{quest:main} in the affirmative.  We prove the same result when the homology ball admits a handle structure with no 3-handles.  

\begin{theorem}\label{thm:smoothIII}
Let $Y$ be a homology sphere which bounds a smooth homology ball $W$ which has a handle structure with no 3-handles.  Let $K$ be a knot in $Y$ which is nullhomotopic in $W$.  Then $K$ is homotopic in $Y$ to a knot $K'$ which bounds a smoothly embedded disk in $W$.  
\end{theorem}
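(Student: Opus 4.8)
The plan is to represent the null-homotopy of $K$ by a singular disk, to position this disk near the boundary using the absence of $3$-handles, and then to remove its self-intersections one at a time by crossing changes performed on $K$ inside $Y$. Since a crossing change is a homotopy of a knot, the accumulated effect will be a homotopy of $K$ in $Y$ to a knot bounding the resulting embedded disk in $W$.

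First I would fix a generic smooth immersion $\Delta\colon D^2\looparrowright W$ with $\Delta|_{\partial D^2}=K$, which exists because $K$ is null-homotopic in $W$; after a small perturbation $\Delta$ has only finitely many transverse double points $p_1,\dots,p_n$ in its interior. Next I would turn the given handle decomposition of $W$ upside down, so that $W$ is built from a collar $Y\times[0,1]$ of its boundary by attaching dual handles of index $4-i$. Because $W$ has no $3$-handles, this dual decomposition has no $1$-handles, so $W=(Y\times[0,1])\cup(\text{$2$-handles})\cup(\text{$3$-handles})\cup(\text{$4$-handle})$ in the dual indexing.

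The geometric heart of the argument is to push $\Delta$ toward $Y$, and a dimension count governs which dual handles obstruct this: a $2$-dimensional disk can be made disjoint, rel $\partial$, from the cocore of a dual handle of index $k$ precisely when $2+(4-k)<4$, i.e.\ when $k\ge 3$. Thus transversality lets me isotope $\Delta$ off the cocores of all dual $3$- and $4$-handles, leaving $\Delta\subset(Y\times[0,1])\cup(\text{$2$-handles})$. This is exactly where the hypothesis enters: had $W$ possessed $3$-handles, the dual decomposition would contain $1$-handles, whose cocores are $3$-dimensional and meet $\Delta$ in curves that cannot be removed --- the ``tunnels'' on which a null-homotopy is genuinely caught, and which force one to pass to a homology-concordant knot as in Theorem~\ref{thm:smoothII}. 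With only dual $2$-handles remaining, $\Delta$ meets their cocores in isolated points, which an embedded disk may freely traverse. Having confined $\Delta$ in this way, I would then slide each double point $p_i$, together with the two local sheets of $\Delta$ through it, along $\Delta$ into the collar $Y\times[0,1]$; such sliding paths can be chosen within $\Delta$ running out to $\partial\Delta=K$, which already lies in the collar, while avoiding the isolated cocore points. Once every double point lies in the collar, each is removed by a single crossing change on $K$: a double point together with the two arcs of $\Delta$ joining it to $K$ forms a clasp between two strands of $K$, and changing that crossing unclasps it, surgering $\Delta$ to an embedded disk near $p_i$. Performing these $n$ crossing changes yields a knot $K'$ homotopic to $K$ in $Y$ and bounding the resulting embedded disk $\Delta'$ in $W$, as required.

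I expect the main obstacle to be the step of bringing every double point into the collar past the dual $2$-handles: one must verify that the sheets of $\Delta$ running over a $2$-handle can be isotoped so that no double point is trapped there and that the sliding arcs avoid the cocore intersections. This is the technical content that distinguishes the no-$3$-handle case treated here from the general situation of Theorem~\ref{thm:smoothII}, and it is the analogue for knots in homology spheres of the elementary fact (used by Kojima \cite{Kojima79} in the no-$1$-handle case) that a disk in a collar can be embedded after crossing changes on its boundary.
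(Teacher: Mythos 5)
Your opening steps are fine as far as they go: a generic null-homotopy gives a properly immersed disk $\Delta$, the dimension count for pushing $\Delta$ off the cocores of the dual $3$- and $4$-handles is correct, and double points can indeed be dragged along $\Delta$ into the collar. The gap is in the final, load-bearing step: the claim that a double point of $\Delta$, together with two arcs of $\Delta$ joining its two sheets to $K$, ``forms a clasp between two strands of $K$'' which a single crossing change ``unclasps.'' To perform a crossing change you must first join the two strands by a path $\gamma$ in $Y$ and push one strand through the other along $\gamma$; the trace of that homotopy is an immersed annulus with one \emph{new} double point, and removing the original double point means cancelling these two against each other. That cancellation is a Whitney move whose Whitney circle is, up to homotopy, the loop $\alpha_1\cdot\alpha_2^{-1}\cdot\gamma^{-1}$ formed by your two arcs and the path $\gamma$. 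So the unclasping requires, at the very least, that this loop be null-homotopic in $W$, and even then it requires a smoothly embedded, correctly framed Whitney disk with interior disjoint from $\Delta$ --- exactly what the failure of the Whitney trick in dimension four does not hand you for free. Your argument confronts neither point: it is purely local near the double point, uses no fundamental group information, and never invokes the no-$3$-handle hypothesis in this step (you spend that hypothesis on confining $\Delta$ to the collar and the dual $2$-handles, which does not help, since the clasp loop can still represent an arbitrary element of $\pi_1(W)$). Indeed, if the local unclasping were valid as stated, it would prove that \emph{every} knot in $\partial W$ that is null-homotopic in $W$ is homotopic in $\partial W$ to a knot bounding a smoothly embedded disk in $W$, for an arbitrary compact $4$-manifold $W$; that would remove the homology concordance from Theorem~\ref{thm:smoothII} and answer Question~\ref{quest:main} affirmatively for all null-homotopic knots --- precisely what this paper does not claim, and the same difficulty is why Strle and Owens need a concordance even for knots in $S^3=\partial B^4$.

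The hypothesis has to enter through the fundamental group, and that is how the paper's proof works, by an entirely different and much more algebraic route. With no $3$-handles, the upside-down structure on $(W,Y)$ has no relative $1$-handles, so $\pi_1(Y)\to\pi_1(W)$ is onto and its kernel is normally generated by the attaching circles $\beta_i$ of the dual $2$-handles. Proposition~\ref{prop:no 3-handles} writes $[K]=\prod_k[c_k\beta_{i_k}^{\epsilon_k}c_k^{-1}]$, builds a knot $K'$ in this homotopy class by banding a small unknot to framed pushoffs of the $\beta_{i_k}$ along the arcs $c_k$, and caps $K'$ off with pushoffs of the cores of those $2$-handles banded to a small disk: no immersed disks, double points, or Whitney moves appear anywhere. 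If you want to salvage your approach, the missing ingredients are (i) choosing, for each double point, the connecting path $\gamma\subseteq Y$ so that the clasp loop dies in $\pi_1(W)$, which is possible precisely because $\pi_1(Y)\to\pi_1(W)$ is onto in the no-$3$-handle case, and (ii) justifying the smooth cancellation afterwards; item (ii) is the genuinely hard part that your write-up assumes away, and avoiding it is the whole point of the paper's handle-theoretic argument.
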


As an application consider the knot in a homology sphere $(Y,K)$ of Figure~\ref{fig:LevineExample}.  By \cite[Theorem 1.1]{Levine2016} this knot does not bound any piecewise linear embedded disk in any smooth homology ball.  As a consequence $(Y,K)$ is not homology concordant to any knot in $S^3$.  However, notice that $Y$ bounds a contractible 4-manifold which has one 0-handle, one 1-handle, one 2-handle, and most importantly no 3-handles.  Thus, by Theorem~\ref{thm:smoothIII}, $K$ is homotopic to another knot in $Y$ which is homology slice.  

\begin{figure}[h]
\begin{picture}(100,100)
\put(0,5){\includegraphics[height=.15\textheight]{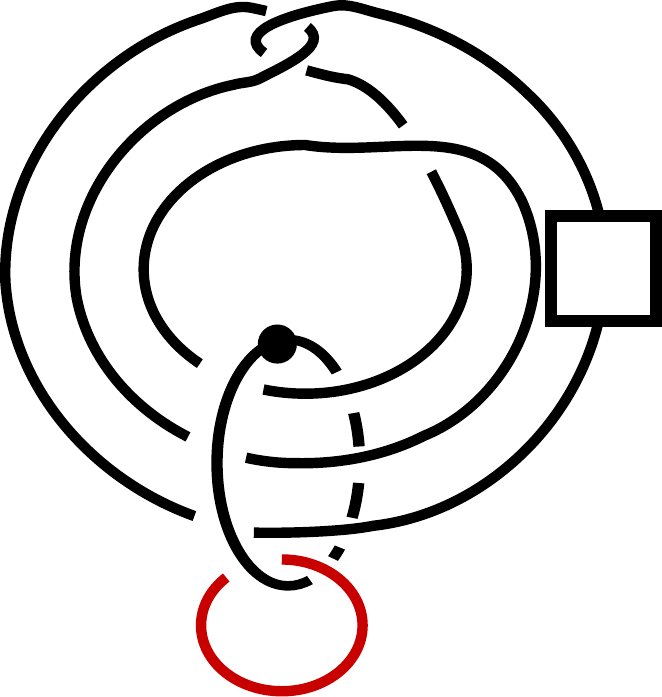}}
\put(81,61){$T$}
\put(49,0){\textcolor{myRed}{$K$}}
\put(5,90){$0$}
\end{picture}
\caption{A knot $K$ in the boundary of a contractible 4-manifold.  For a suitable knot $T$ this knot is not homology concordant to any knot in $S^3$ \cite[Theorem 1.1]{Levine2016}. }\label{fig:LevineExample}
\end{figure} 

There is an extension of the equivalence relations $\simeq_h^3$ and $\simeq_c$.  Given knots $K$ and $J$ in homology spheres $Y$ and $X$, if there exists a smooth homology cobordism from $Y$ to $X$ in which $K$ and $J$ are  homotopic then we write $(Y,K)\simeq_h^4(X,J)$.  Equivalently and more geometrically we say $(Y,K)\simeq_h^4(X,J)$ if there exists a smooth homology cobordism from $Y$ to $X$ in which $K$ and $J$ cobound an immersed annulus.   It is clear that if $(Y,K)$ and $(X,J)$ are related by a sequence of homotopies and homology concordances then they are related under $\simeq_h^4$.  The following theorem reveals the converse: if $(Y,K)\simeq_h^4(X,J)$ then $(Y,K)$ is related to $(X,J)$ by a sequence consisting of one homotopy and two homology concordances.  Thus, the equivalence relation $\simeq_h^4$ is generated by homotopy and homology concordance.

\begin{theorem}\label{thm:relative}
Let $(Y,K)$ and $(X,J)$ be knots in homology spheres.  If there exists a smooth homology cobordism from $Y$ to $X$ in which $K$ is  homotopic to $J$ then there exist knots $K'$ and $J'$ in some homology sphere $Z$ such that $$(Y,K)\simeq_c(Z,K')\simeq_h^3(Z,J')\simeq_c (X,J).$$  
\end{theorem}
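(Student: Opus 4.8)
The plan is to exhibit the given homotopy in a homology cobordism as a concatenation of an embedded annulus in a homology cobordism, a genuine homotopy inside a fixed homology sphere, and a second embedded annulus in a homology cobordism. Write $V$ for the smooth homology cobordism from $Y$ to $X$, and let $A$ be an immersed annulus in $V$ cobounded by $K$ and $J$ with finitely many transverse double points. The whole argument rests on two moves: first, locating a homology sphere $Z$ inside $V$ that splits it into two homology cobordisms; and second, sweeping every self-intersection of $A$ into a product collar of that slice, so that $A$ is embedded on either side.

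First I would produce the middle homology sphere. Equip $V$ with a handle decomposition relative to $Y$; since $V$, $Y$, and $X$ are connected we may cancel all $0$- and $4$-handles, leaving handles of index $1,2,3$. Because $V$ is a homology cobordism, the relative handle chain complex $0\to C_3\xrightarrow{\partial_3}C_2\xrightarrow{\partial_2}C_1\to 0$ is acyclic, so $\partial_2$ is surjective and $\partial_3$ is injective with $\operatorname{im}\partial_3=\ker\partial_2$. As $C_1$ is free, $\partial_2$ splits, and we may write $C_2=P\oplus Q$ with $Q=\ker\partial_2=\operatorname{im}\partial_3$ and $\partial_2$ restricting to an isomorphism $P\xrightarrow{\sim}C_1$; automatically $\partial_3\colon C_3\xrightarrow{\sim}Q$. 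This $\Z$-basis change of the $2$-handles is realized by handle slides, after which I reorder so that the $1$-handles and the $P$-handles are attached below a regular level $Z$ and the $Q$-handles and $3$-handles above it (handles of equal index may be reordered freely). The sub-cobordism $V_-$ below $Z$ then has relative chain complex $0\to P\xrightarrow{\sim}C_1\to 0$ and the sub-cobordism $V_+$ above $Z$ has relative complex $0\to C_3\xrightarrow{\sim}Q\to 0$, both acyclic. Hence $Z$ is a homology sphere, $V=V_-\cup_Z V_+$, and $V_-,V_+$ are homology cobordisms from $Y$ to $Z$ and from $Z$ to $X$ respectively. I insert a product collar $Z\times[\tfrac12-\epsilon,\tfrac12+\epsilon]$ about $Z$.

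With $V=V_-\cup_Z V_+$ fixed, the remaining task is to isotope $A$ rel $\partial$ so that $A\cap V_-$ and $A\cap V_+$ are embedded annuli and every double point lies in the collar. Choosing a Morse function adapted to both $V$ and $A$, I arrange $A$ to be transverse to the levels $M_t$ of $V$ with cross-section $A_t=A\cap M_t$ a single circle for $t\notin(\tfrac12-\epsilon,\tfrac12+\epsilon)$; the handle critical points then appear as surgeries performed on $M_t$ away from $A_t$. Since each double point is an isolated point of the $4$-manifold $V$, while each handle is a neighborhood of a core--cocore pair of dimension at most $3$, general position lets me isotope $A$ so the double points avoid the handles and then slide their $t$-coordinates freely into $(\tfrac12-\epsilon,\tfrac12+\epsilon)$. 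The result is that $A\cap V_-$ is an embedded annulus from $K$ to $K':=A\cap(Z\times\{\tfrac12-\epsilon\})$ and $A\cap V_+$ is an embedded annulus from $J':=A\cap(Z\times\{\tfrac12+\epsilon\})$ to $J$, giving $(Y,K)\simeq_c(Z,K')$ and $(Z,J')\simeq_c(X,J)$, while inside the collar $A$ is an immersed annulus from $K'$ to $J'$ in the product $Z\times I$, i.e.\ a homotopy witnessing $(Z,K')\simeq_h^3(Z,J')$. Concatenating yields
$$
(Y,K)\simeq_c(Z,K')\simeq_h^3(Z,J')\simeq_c(X,J),
$$
as required.

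The conceptual heart is the first move: extracting a genuine homology sphere $Z$ from the homology cobordism by splitting its acyclic handle chain complex, which is what makes a \emph{single} intermediate $3$-manifold carry both $K'$ and $J'$. I expect the main technical obstacle to be the general-position argument of the third paragraph: collecting all self-intersections into the collar while keeping the boundary knots $K\subset Y$ and $J\subset X$ fixed, and in particular verifying that one can slide each double point past every ambient surgery---the $3$-handle attachments being the delicate case---without ever forcing the cross-section $A_t$ through an attaching sphere. Here the local double-point manipulations developed in the proof of Theorem~\ref{thm:smoothII} supply the needed moves.
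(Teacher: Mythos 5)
Your first move---splitting $V$ along a middle level into homology cobordisms $V_-\cup_Z V_+$ by reorganizing the acyclic relative handle chain complex---is sound, and it parallels the decomposition the paper establishes for homology balls in Proposition~\ref{prop:decompose}. The gap is in your third paragraph, and it is not a removable technicality: the assertion that general position lets you make every cross-section $A_t$ a single circle outside the collar and then ``slide the $t$-coordinates'' of the double points into the collar is essentially the content of the theorem, not a lemma one gets for free. Two concrete problems. First, keeping $A_t$ a single circle for all $t$ outside the collar forces $f|_A$ to have no critical points (births, saddles, deaths) there; confining all critical points of $f|_A$ to a prescribed product region is a normal-form statement for immersed annuli in homology cobordisms that is equivalent to what you are trying to prove, and general position supplies nothing of the sort. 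Second, double points cannot be moved independently of the surface: to lower a double point into the collar you must drag two sheets of $A$ along with it, and the resulting fingers meet every intermediate level in extra circles. After such a move $A\cap V_+$ is an embedded planar surface with additional boundary components, not an annulus, so the move destroys the very concordances you are trying to exhibit; re-merging those tubes creates new saddles and reintroduces the first problem. The distinction you are attempting to erase---between $K'$ and $J'$ being homotopic in a $4$-dimensional piece and being homotopic in the $3$-manifold $Z$---is exactly the difference between $\simeq_h^4$ and $\simeq_h^3$, and it carries a genuine $\pi_1$ obstruction. Your closing appeal to ``local double-point manipulations developed in the proof of Theorem~\ref{thm:smoothII}'' is unfounded: that proof contains no such moves.

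For contrast, the paper's route is algebraic rather than geometric, exploiting the group structure under connected sum. From $(Y,K)\simeq_h^4(X,J)$ one obtains $(Y\#-X,\,K\#-J)\simeq_h^4(X\#-X,\,J\#-J)$; since $J\#-J$ is homology slice, stacking the cobordism on a homology ball shows $K\#-J$ is nullhomotopic in a homology ball, so Theorem~\ref{thm:smoothII} yields
$$
(Y\#-X,\,K\#-J)\simeq_c(Z_0,L)\simeq_h^3(Z_0,L')\simeq_c(S^3,U),
$$
and summing each term with $(X,J)$ recovers the statement. The real geometric work is thus concentrated in Theorem~\ref{thm:smoothII}, whose proof does not isotope the given annulus at all: it corrects the concordance by banding it with an embedded disk built from pushoffs of $2$-handle cores (via Proposition~\ref{prop:no 3-handles}), the disk chosen to represent a specific class---a product of conjugates of meridians---in $\pi_1(Y'-\nu(K_0'))$, precisely so that the terminal knot becomes homotopic to a slice knot inside the $3$-manifold rather than merely inside the $4$-manifold. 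That correction mechanism is what your general-position step would have to replace, and without it the proposal does not close.
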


\subsection*{Outline of paper}

In Section~\ref{sect:no 3-handles} we consider the case that a knot $K$ in a homology sphere $Y$ is nullhomotopic in a homology ball which has no 3-handles and prove Theorem~\ref{thm:smoothIII}.  In Section~\ref{sect:3-handles} we manipulate handle structures of homology balls in order to separate the 1- and 3-handles.  We go on to prove Theorems~\ref{thm:smooth} and \ref{thm:smoothIII}.  In Section~\ref{sect:relative} we take advantage of the group structure on $\C$ to prove Theorem~\ref{thm:relative}.  

\subsection*{Acknowledgments}

The author would like to thank MinHoon Kim, Arunima Ray, JungHwan Park, Paolo Acieto, Kent Orr, Aliakbar Daemi, and Adam Levine for helpful conversations at various stages of the development of the ideas leading to this paper.

\section{In the absence of 3-handles.}\label{sect:no 3-handles}

Throughout this paper, we make extensive use of handle decompositions of smooth manifolds.  A good reference is \cite{KirbyCalculus}.  
We begin with the following proposition  revealing that if $W$ is a 4-manifold, $Y\subseteq \bdry W$, and $(W,Y)$ has no relative 1-handles, then every homotopy class in $\ker(\pi_1(Y)\to \pi_1(W))$ is represented by a knot which bounds a smoothly embedded disk in $W$.  

\begin{proposition}\label{prop:no 3-handles}
Let $W$ be a smooth, connected, compact, orientable 4-manifold and $Y$ be a submanifold of $\bdry W$.  Suppose that $(W,Y)$ admits a handle structure with no 1-handles.  Let $\gamma\in \ker(\pi_1(Y)\to \pi_1(W))$.  There exists a knot $K$ in the homotopy class, $\gamma$, which bounds a smoothly embedded disk in $W$.   
\end{proposition}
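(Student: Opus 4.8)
The plan is to reduce the statement to a purely group-theoretic description of the kernel and then build the disk by hand out of the cores of the $2$-handles. First I would fix a relative handle decomposition of $(W,Y)$ with no $1$-handles, so that $W$ is obtained from a collar $Y\times[0,1]$ (with $Y=Y\times\{0\}\subseteq\bdry W$) by attaching handles of index $2$, $3$, and $4$ only. Write $c_1,\dots,c_k\subseteq Y\times\{1\}$ for the attaching circles of the $2$-handles. Since attaching a handle of index $\geq 3$ does not change $\pi_1$, and attaching the $2$-handle along $c_i$ exactly imposes the relation $c_i=1$, van Kampen's theorem identifies the inclusion-induced map $\pi_1(Y)\to\pi_1(W)$ with the quotient map $\pi_1(Y)\to\pi_1(Y)/N$, where $N$ is the normal subgroup generated by $c_1,\dots,c_k$. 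Consequently $\ker(\pi_1(Y)\to\pi_1(W))=N$, and our hypothesis that $\gamma$ lies in this kernel yields a factorization $\gamma=\prod_{j=1}^{n} g_j\, c_{i_j}^{\epsilon_j}\, g_j^{-1}$ with $g_j\in\pi_1(Y)$ and $\epsilon_j\in\{\pm1\}$.

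Next I would observe that each attaching circle bounds an embedded disk in $W$: the union of the cylinder $c_i\times[0,1]$ in the collar with the core of the corresponding $2$-handle is an embedded disk $\Delta_i$ with $\bdry\Delta_i=c_i\times\{0\}\subseteq Y$, and disks coming from distinct $2$-handles are disjoint. For the factorization above I would take, for each $j$, a parallel push-off of $\Delta_{i_j}$ (so that repeated indices give genuinely disjoint disks), obtaining $n$ pairwise disjoint embedded disks whose boundaries are $n$ pairwise disjoint embedded circles in $Y$, the $j$-th being a copy of $c_{i_j}$ oriented according to the sign $\epsilon_j$.

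Finally I would assemble the knot and its disk by a band sum. Choosing embedded arcs in $Y$ in the homotopy classes $g_j$ and thickening them to disjoint embedded bands, I band-sum the $n$ circles into a single embedded knot $K\subseteq Y$ and simultaneously band-sum the $n$ disks (pushing the bands slightly off $\bdry W$ into the interior of $W$) into a single embedded disk bounded by $K$. The point of running the $j$-th band along an arc representing $g_j$ is that the resulting band sum is freely homotopic in $Y$ to the prescribed product of conjugates, namely $\gamma$.

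The main obstacle will be this last step: verifying that the bands can be chosen simultaneously (i) embedded and pairwise disjoint and (ii) so that the band sum realizes exactly the word $\prod_j g_j c_{i_j}^{\epsilon_j} g_j^{-1}$ rather than merely some element of $N$. Point (i) is handled by general position, since the cores of the bands are $1$-dimensional arcs in the $3$-manifold $Y$ and can therefore be made disjoint from one another and from the circles except at their endpoints; the only delicate issue is the choice of band framings, which affect the isotopy type of $K$ but not the homotopy class of its boundary. Point (ii) is a standard but bookkeeping-heavy computation of the effect of a band sum on $\pi_1$, tracking the base point and the conjugating paths. Everything else—embeddedness of each $\Delta_i$, disjointness of the parallel copies, and the fact that band-summing disks along disjoint bands again yields a disk—follows from general position together with the product structure of the collar.
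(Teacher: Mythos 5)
Your proposal is correct and follows essentially the same route as the paper: identify $\ker(\pi_1(Y)\to\pi_1(W))$ as the normal closure of the $2$-handle attaching circles, write $\gamma$ as a product of conjugates of them, and realize that word by band-summing framed pushoffs of the attaching circles along the conjugating arcs, capping with disjoint parallel pushoffs of the $2$-handle cores. The only cosmetic difference is that the paper band-sums everything onto a small ``hub'' unknot based at the basepoint $q$ (with its small disk), which makes the bookkeeping you flag in step (ii) immediate, since each band then contributes exactly one conjugate $c_k\beta_{i_k}^{\epsilon_k}c_k^{-1}$ to the based homotopy class.
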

\begin{proof}
 Let $\beta_1,\dots, \beta_m\subseteq Y$ be the cores of the attaching regions of the 2-handles of $(W, Y)$ regarded as framed knots.  Notice that if we take any collection of framed pushoffs of the various $\beta_i$, then the resulting link bounds a collection of disjoint smoothly embedded disks in $W$.  Indeed, these disks are pushoffs of the cores of the 2-handles.

Pick a base-point $q$ in $Y$.  
After a choice of basing arcs, the homotopy classes $[\beta_1],\dots, [\beta_m]$ normally generate $\ker(\pi_1(Y,q)\to \pi_1(W,q))$.  By assumption, $\gamma$ is in $\ker(\pi_1(Y,q)\to \pi_1(W,q))$.  We conclude that $\gamma$ is a product of conjugates of the $ \beta_i$:  $$\gamma = \Prod_{k=1}^n [c_k\beta_{i_k}^{\epsilon_{k}}c_k^{-1}]$$ for some choice of arcs $c_k\subseteq Y$ running from $q$ to a point on $\beta_k$ and $\epsilon_k = \pm1$.  Here $[c_k\beta_{i_k}^{\epsilon_{k}}c_k^{-1}]\in \pi_1(Y,q)$ is the homotopy class of the result of following $c_k$, then $\beta_{i_k}$ (or its reverse if $\epsilon_k=-1$), and finally the reverse of $c_k$.  After a small homotopy we assume that the various $c_k$ are embedded arcs, are disjoint from each other (except for the point at $q$), and have interiors disjoint from the various $\beta_i$.
 
As a consequence, we may construct a knot $K$ in the  the homotopy class $\gamma$ by starting with a single unknot based at $q$ and banding it with the pushoffs of $\beta_{i_k}$ (or the reverse $\beta_{i_k}^{-1}$) along bands around the $c_k$.  Finally, we build a smoothly embedded disk bounded by $K$ as follows.  For each $k=1,\dots, n$ let $\Delta_{k}$ be a pushoff of the core of the 2-handle attached along $\beta_{i_k}$ (or its orientation reverse if $\epsilon_k=-1$).  Observe that $\Delta_{1}, \dots, \Delta_{n}$ are disjoint smoothly embedded disks.  Band each $\Delta_{k}$ to a single disk bounded by the unknot centered at $q$ along the arc $c_k$.  The resulting surface is the required smoothly embedded disk.  
\end{proof}

Theorem~\ref{thm:smoothIII} amounts to a special case of Proposition~\ref{prop:no 3-handles}

\begin{reptheorem}{thm:smoothIII}
Let $Y$ be a homology sphere which bounds a smooth homology ball $W$ which has a handle structure with no 3-handles.  Let $K$ be a knot in $Y$ which is nullhomotopic in $W$.  Then $K$ is homotopic in $Y$ to a knot $K'$ which bounds a smoothly embedded disk in $W$.  
\end{reptheorem}
\begin{proof}
Let $Y$ be a smooth homology sphere which bounds a smooth homology ball $W$ which has a handle structure with no 3-handles.  Turning this handle structure upside-down $(W,Y)$ has a handle structure with no 1-handles.  Thus, Proposition~\ref{prop:no 3-handles} applies, so that since the homotopy class of $K$ is in $\ker(\pi_1(Y)\to \pi_1(W))$ we conclude that $K$ is homotopic in $Y$ to a knot which bounds a smoothly embedded disk in $W$.  
\end{proof}

\section{Separating 3-handles from 1-handles.}\label{sect:3-handles}

In the case of a homology ball $W$ which has 3-handles, we split $W$ into two pieces separating the 1-~and 3-handles.

\begin{proposition}\label{prop:decompose}
Let $W$ be a smooth homology ball with boundary $\bdry W = Y$.  Then there exists a smooth homology ball $W'$ with $\bdry W' = Y'$ and a smooth homology cobordism $V$ from $Y'$ to $Y$ such that 
\begin{enumerate}
\item $W'\cup_{Y'} V =W$
\item $W'$ has a handle structure with neither $3$- nor $4$-handles and so $(W',Y')$ has neither $0$- nor $1$-handles.
\item $(V,Y')$ has a handle structure with neither $0$- nor $1$-handles, so that $(V,Y)$ has neither $3$- nor $4$-handles.
\end{enumerate}
If  $W$ is a contractible 4-manifold then $W'$ can be arranged to also be contractible. 
\end{proposition}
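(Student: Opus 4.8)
The plan is to take a handle decomposition of $W$ and reorganize it so that the low-index handles assemble into $W'$ while the index-$3$ handles, together with just enough index-$2$ handles to ``pay for'' them homologically, assemble into $V$. First I would fix a handle decomposition of $W$ with a single $0$-handle and no $4$-handle; this is possible because $\bdry W = Y\neq\emptyset$ (use a Morse function whose maximum lies on the boundary). Arrange the handles in order of increasing index, with $c_i$ handles of index $i$, so that the handle chain complex is $0\to C_3\xrightarrow{\bdry_3} C_2\xrightarrow{\bdry_2} C_1\xrightarrow{\bdry_1}C_0\to 0$ with $C_i=\Z^{c_i}$. Since $W$ is a homology ball this complex is acyclic away from degree $0$; in particular $\bdry_3$ is injective and its cokernel $C_2/\operatorname{im}\bdry_3\cong\operatorname{im}\bdry_2$ is free.

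The key step is to slide handles so that $\bdry_3$ is in Smith normal form. Sliding the $3$-handles over one another realizes column operations on $C_3$ and sliding the $2$-handles over one another realizes row operations on $C_2$; since $\bdry_3$ is injective with free cokernel, after finitely many such slides its matrix becomes $\left(\begin{smallmatrix} I_{c_3}\\ 0\end{smallmatrix}\right)$. This singles out $c_3$ ``special'' $2$-handles, each paired algebraically with exactly one $3$-handle. Now set $W'$ to be the $0$-handle together with the $1$-handles and the $c_2-c_3$ non-special $2$-handles, let $Y'=\bdry W'$, and let $V$ be the cobordism from $Y'$ to $Y$ built by attaching the special $2$-handles and then the $3$-handles. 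Because handles of a fixed index are attached along disjoint regions, this partition is legitimate and $W'\cup_{Y'} V=W$. Turning the two pieces upside down verifies the handle-index conditions (2) and (3) directly: $W'$ has only handles of index $\le 2$, so $(W',Y')$ has no $0$- or $1$-handles, while $(V,Y')$ consists only of $2$- and $3$-handles, so $(V,Y)$ has no $3$- or $4$-handles.

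It then remains to check the homological conclusions. The relative handle chain complex of $(V,Y')$ is $\Z^{c_3}\xrightarrow{I_{c_3}}\Z^{c_3}$, since the $3$-handles meet the cocores of the special $2$-handles by the identity matrix; hence $H_*(V,Y')=0$ and $V$ is a homology cobordism. A Mayer--Vietoris argument for $W=W'\cup_{Y'}V$, using that $Y'\hookrightarrow V$ is a homology isomorphism, then yields $H_*(W')\cong H_*(W)=H_*(\mathrm{pt})$, so that $W'$ is a homology ball and $Y'$ is a homology sphere. I want to stress that I deliberately do not (and cannot) cancel the special $2$-handles against the $3$-handles geometrically: the matrix $I_{c_3}$ records only algebraic intersection numbers, and in dimension $4$ there is no Whitney trick to push the geometric intersections down to one. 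Thus $V$ is in general a nontrivial homology cobordism rather than a product, which is exactly what makes the decomposition useful.

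Finally, for the contractible case I would first arrange $c_1=0$. When $W$ is contractible it is simply connected, so its $1$-handles may be traded for $3$-handles by the standard move of introducing cancelling $2$-/$3$-handle pairs and cancelling each $1$-handle against the new $2$-handle; this produces a handle decomposition of $W$ with no $1$-handles (and still a single $0$-handle and no $4$-handle), to which the construction above applies verbatim. With no $1$-handles, $W'$ is simply connected, and a simply-connected homology ball is contractible by Hurewicz and Whitehead. The main obstacle is precisely this trading step: it is the one place where simple-connectivity is essential, since the analogous move fails in general (for instance for $S^1\times D^3$, whose $1$-handle cannot be removed), and it is the delicate dimension-$4$ input of the argument; the remainder is bookkeeping with the handle chain complex and Mayer--Vietoris.
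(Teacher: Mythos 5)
Your treatment of the homology-ball case is correct and is essentially the paper's argument in different clothing: the paper row-reduces the presentation matrix of $H_1(W)=0$ determined by $\partial_2$ (realizing row moves by 2-handle slides, reorderings, and reversals) and keeps the first $n$ 2-handles in $W'$, whereas you normalize $\partial_3$ to $\left(\begin{smallmatrix}I\\ 0\end{smallmatrix}\right)$ and move its ``special'' 2-handles into $V$. Your algebra is right ($\partial_3$ is a split injection because its cokernel $\cong\operatorname{im}\partial_2$ is free), the handle-slide realization of the elementary operations is standard, and the chain-complex, Mayer--Vietoris, and duality bookkeeping showing that $V$ is a homology cobordism and $W'$ a homology ball goes through. (In fact you never need 3-handle slides: since $\operatorname{im}\partial_3$ is a direct summand of $C_2$ containing the images of the standard basis vectors, a basis change of $C_2$ alone---that is, 2-handle slides---already puts $\partial_3$ in the desired form.)

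The contractible case, however, has a genuine gap. You invoke ``the standard move'' of trading each 1-handle of a simply connected 4-manifold for a 3-handle. There is no such standard move: whether every compact simply connected 4-manifold admits a handle decomposition without 1-handles is a well-known open problem (for closed manifolds this is Problem 4.18 on Kirby's problem list), and no general trading procedure is known. Here is where the move breaks. To cancel a 1-handle against a new 2-handle, the attaching circle $\mu$ of that 2-handle must meet the belt sphere of the 1-handle \emph{geometrically} once; but for the new 2-handle to be introduced as half of a cancelling 2-/3-handle pair, $\mu$ must bound an \emph{embedded} disk in the level where the 3-handle is to be attached, away from the other attaching regions. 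Simple connectivity of $W$ only makes $\mu$ null-homotopic in $W$, i.e.\ produces an immersed disk; upgrading ``algebraically dual'' to ``geometrically dual'' is exactly the failure of the Whitney trick in dimension 4 that you yourself stress elsewhere in your argument. (Your $S^1\times D^3$ example only shows simple connectivity is \emph{necessary}; the difficulty is that it is not known to be \emph{sufficient}.) Note also that conclusion (2) of the proposition permits $W'$ to have 1-handles, so eliminating them from $W$ is more than the statement requires. The paper's solution keeps the 1-handles: it constructs knots $A_i\subseteq \partial W^{(1)}$ representing a free basis $[\alpha_i]$ of $\pi_1(W^{(1)})$ as band sums of push-offs of the existing attaching circles $\beta_j$, so that each $A_i$ can be slid over the $\beta_j$-handles to an unlink and hence bounds an embedded disk in $\partial W^{(2)}$; this is what legitimizes adding 2-handles along the $A_i$ together with genuinely cancelling 3-handles, leaving $W$ unchanged. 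Then $W'=W^{(1)}\cup(\text{2-handles along the }A_i)$ is a simply connected homology ball, hence contractible. Your proof needs this idea (or an equivalent one); as written, its contractible case rests on an open problem.
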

\begin{proof}
Start by picking a handle decomposition of $W$ consisting of a single 0-handle, $n$ 1-handles, $m$ 2-handles, some number of 3-handles and no 4-handles. Let $W^{(d)}$ denote the union of all handles of dimension at most $d$.   Let $\alpha_1,\dots, \alpha_n\subseteq \bdry W^{(1)}$ be curves dual to the  belt spheres of the added 1-handles.  The homology classes $[\alpha_1]\dots[\alpha_n]$ give a basis for $H_1(W^{(1)})\cong\Z^n$.  Let $\beta_1,\dots, \beta_m\subseteq \bdry W^{(1)}$ be the cores of the attaching regions of the added 2-handles.  We express the homology classes $[\beta_1],\dots, [\beta_m] \in H_1(W^{(1)})$ as vectors in terms of the basis $[\alpha_1],\dots, [\alpha_n]$ in order to get an $(m\times n)$ presentation matrix, $P$, for $H_1(W)=0$.  See for example \cite[Section 4.2]{KirbyCalculus}.   

As $P$ presents the trivial group, a sequence of row-moves reduces $P$ to an $(m\times n)$ matrix with $1$'s on the main diagonal and $0$'s elsewhere.  These row-moves in turn can be realized by reordering and reversing some $\beta_i$'s and by sliding some $\beta_i$'s over other $\beta_j$'s.  See for example \cite[Section 5.1]{KirbyCalculus}.  Thus, after performing these  moves, we may assume that in $H_1(W^{(1)})$, $[\beta_i]=[\alpha_i]$ for all $i\le n$ and $[\beta_i]=0$ for all $i>n$.  

Let $W'$ be the result of adding to $W^{(1)}$ 2-handles along $\beta_1,\dots, \beta_n$.  Since $[\beta_i] =[\alpha_i]$ in $H_1(W^{(1)})$ for $i=1,\dots n$ we see that $H_1(W')=H_2(W')=0$.  Because $W'$ has neither 3- nor $4$-handles, $H_3(W') = H_4(W')=0$.  Thus, $W'$ is a homology ball.  
 Let $Y'=\bdry W'$ and $V$ be given by starting with $Y'\times[0,1]$, adding 2-handles along $\beta_{n+1},\dots, \beta_m$ and finally adding all 3-handles.  Then $W=W'\cup_{Y'} V$.  Because $W$ and $W'$ are both homology balls, a Mayer-Veitoris argument reveals that $V$ is a homology cobordism.  By its very construction, $(V,Y')$ has only 2- and 3-handles.  This gives the desired result when $W$ is a homology ball. 

 Now suppose that $W$ is a contractible 4-manifold.  Pick a basepoint $q\in \bdry W^{(1)}$.  After a choice of  basing arcs, we see that  $\pi_1(\bdry W^{(1)}, q)\cong \pi_1(W^{(1)}, q)$ is the free group on the set of homotopy classes $[\alpha_1],\dots, [\alpha_n]$.  Moreover, as $W$ is simply connected, 
 $$\pi_1(\bdry W^{(1)}) = \ker\left(\pi_1(\bdry W^{(1)},q)\to \pi_1(W,q) = \{0\}\right)$$ 
 is normally generated by the homotopy classes of the cores of the attaching regions of the 2-handles $[\beta_1],\dots, [\beta_m]$, after a choice of basing arcs.  Thus, each $\alpha_i$ is homotopic in $\bdry W^{(1)}$ to a product of conjugates of the attaching regions:
 $$
 [\alpha_i] = \Prod_{k=1}^{\ell_i} [c_{i,k} \beta_{j_{i,k}}^{\epsilon_{i,k}} c_{i,k}^{-1}]
 $$
 for some choices of arcs $c_{i,k}$ running from $q$ to a point on $\beta_{j_{i,k}}$ and $\epsilon_{i,k} = \pm1$.  After a small homotopy we may assume that the various $c_{i,k}$ are all embedded, have disjoint interiors, and have interiors disjoint from the various $\beta_j$.  Similar to the proof of Proposition~\ref{prop:no 3-handles} we build a knot $A_i$ in $\bdry W^{(1)}$ representing the homotopy class $[\alpha_i]$ by starting with a small unknot near $q$ and banding with framed pushoffs of the various $\beta_{j_{i,k}}$ (or their reverses if $\epsilon_{i,k}=-1$) along bands centered on the $c_{i,k}$.  Let $A_1,\dots, A_n$ be the resulting knots.  By construction $[A_i] = [\alpha_i]$ in $\pi_1(\bdry W^{(1)}, q)$.  
 
Since $A_i$ is constructed by starting with the unknot and banding it with the $\beta_{j_{i,k}}$, we may slide each $A_i$ over the various $\beta_{j_{i,k}}$ to reduce the link $A_1\cup \dots\cup A_n$ to the unlink in $\bdry W^{(2)}$.  Thus, the various $A_i$ bound disjoint disks in $\bdry W^{(2)}$.  Restrict a framing of these disks to a framing on $A_i$.  Form a new 4-manifold by starting with $W^{(2)}$ and adding 2-handles along the knots $A_1,\dots, A_n$.  Because the $A_i$ bound disjoint disks in $\bdry W^{(2)}$ the boundary of this new 4-manifold is the connected sum of $\bdry W^{(2)}$ with $n$ copies of $S^1\times S^2$.  Add 3-handles along these new non-separating 2-spheres.  The 2-handles added to $A_i$ together with these 3-handles form cancelling pairs \cite[Proposition 4.2.9]{KirbyCalculus} so that the 4-manifold resulting from adding these 2- and 3-handles to $W^{(2)}$ is diffeomorphic to $W^{(2)}$.  Construct a 4-manifold diffeomorphic to $W$ by adding the remaining 3-handles.  

Thus, we may assume that $W$ has a handle structure with 2-handles attached along framed curves $A_1, \dots, A_n, \beta_1,
\dots, \beta_m\subseteq \bdry W^{(1)}$ where $[A_1], \dots, [A_n]$ give a free basis for $\pi_1(W^{(1)})$.  Let $W'$ be given by adding to $W^{(1)}$ only the 2-handles attached along $A_1,\dots, A_n$.  It  now follows that $W'$ is a simply connected homology ball.  Therefore, $W$ is contractible by a standard application of the Hurewicz homomorphism and the Whitehead theorem.  See for example \cite[Corollary 6.70]{DavisKirk01}.  Let $Y'=\bdry W'$ and $V$ be given by starting with $Y'\times[0,1]$,  adding 2-handles along $b_1,\dots, b_m$ and then  3-handles.  This gives the desired result.
\end{proof}

Finally, we prove Theorems \ref{thm:smooth} and  \ref{thm:smoothII}.

\begin{reptheorem}{thm:smooth}
Let $Y$ be a homology sphere which bounds a smooth contractible 4-manifold.  Let $K$ be a knot in $Y$.  There exists a knot $K'$ in a homology sphere $Y'$ such that $(Y,K)$ is homology concordant to $(Y',K')$ and $K'$ is homotopic in $Y'$ to a third knot $K''$ which bounds a smoothly embedded disk in a smooth contractible 4-manifold bounded by $Y'$.  
\end{reptheorem}

\begin{proof}
Let $Y$ be the boundary of a contractible smooth 4-manifold $W$.  Let $K$ be a knot in $Y$.  
%
  Appeal to Proposition~\ref{prop:decompose} to decompose $W$ as $W'\cup_{Y'} V$ where $W'$ is a contractible 4-manifold with boundary $Y'$, $V$ is a homology cobordism from $Y$ to $Y'$,  $(V,Y)$ has only relative 1-~and 2-handles, and $(W',Y')$ has no 1-handles.  As $(V,Y)$ has only 1-~and 2-handles, we may realize $V$ by starting with $Y\times[0,1]$ and adding $1$- and $2$-handles on $Y\times\{1\}$.  We isotope $K$ slightly to make it disjoint from the attaching regions for these handles.  The image of $K\times[0,1]$ in $Y\times[0,1]\subseteq V$  gives a homology concordance from $K$ to some knot $K'$ in $Y'$.  Now, $\pi_1(W')$ is trivial so that $K'$ is nullhomotopic in $W'$.  Since $W'$ has no 3-handles, Theorem~\ref{thm:smoothIII} applies and we see that $K'$ is homotopic to some other knot $K''$ in $Y'$ which bounds a smoothly embedded disk in $W'$.
\end{proof}

\begin{reptheorem}{thm:smoothII}
Let $Y$ be a  homology sphere which bounds a homology ball $W$.  Let $K$ be a knot in $Y$ which is nullhomotopic in $W$.  There exists a knot $K'$ in a homology sphere $Y'$ such that $(Y,K)$ is homology concordant to $(Y',K')$ and $K'$ is homotopic in $Y$ to a third knot $K''$ which bounds a smoothly embedded disk in a smooth homology ball bounded by $Y'$.  
\end{reptheorem}

\begin{proof}
Let $Y$ be the boundary of a smooth homology ball $W$.  
%
  The proof begins in the same manner as the proof of Theorem~\ref{thm:smooth}.  Appeal to Proposition~\ref{prop:decompose} to decompose $W$ as $W'\cup_{Y'} V$ where $W'$ is a homology ball with boundary $Y'$, $V$ is a homology cobordism from $Y$ to $Y'$,  $(V,Y)$ has only relative 1-~and 2-handles, and $(W',Y')$ has no 1-handles.  As $(V,Y)$ has only 1-~and 2-handles, we realize $V$ by starting with $Y\times[0,1]$ and adding $1$- and $2$-handles to $Y\times\{1\}$.  We may isotope $K$ slightly to get $K$ disjoint from the attaching regions.  The image of $K\times[0,1]$ in $Y\times[0,1]\subseteq V$  gives a concordance from $K$ to some knot $K'_0$ in $Y'$.   From here the proof differs from that of Theorem~\ref{thm:smooth}.  As $W'$ is not simply connected we cannot conclude that $K'_0$ is  nullhomotopic in $W'$.  

Let $K'_+$ be a pushoff of $K_0'$. Pick a basepoint $q\in Y'$ which lies on $K'_+$.  Since $K$ is nullhomotopic in $W$ by assumption, $K'_+$ is nullhomotopic in $W$ as well.  Therefore, the homotopy class $[K'_+]$ lies in $\ker (\pi_1(V,q)\to \pi_1(W,q))$.  Recall that  $(W',Y')$ has only 2-~and 3-handles.  After making a choice of basing arcs, the homotopy classes of the attaching regions for these 2-handles, $\beta_1,\dots, \beta_n\subseteq Y'$, normally generate $\ker (\pi_1(V,q)\to \pi_1(W,q))$.  Thus
$[K'_+]$ is equal in $\pi_1(V,q)$ to a product of conjugates of these $\beta_i$. More precisely, in $\pi_1(V, q)$, 
$
[K'_+] =
\Prod_{k=1}^m [c_k\beta_{i_k}^{\epsilon_k} c_k^{-1}]
$
for some choices of embedded curves $c_k\subseteq V$ running from $q$ to a point on $\beta_{i_k}$.  Since $(V,Y')$ has no relative 1-handles, $\pi_1(Y',q)\to \pi_1(V,q)$ is onto and we may assume that each $c_k$ is embedded in $Y'$.  With this assumption, we have $\Prod_{k=1}^m [c_k\beta_{i_k}^{\epsilon_k} c_k^{-1}]\in \pi_1(Y', q)$ is a product of conjugates of the attaching regions $\beta_1,\dots, \beta_n$.  Thus, $\Prod_{k=1}^m [c_k\beta_{i_k}^{\epsilon_k} c_k^{-1}]$  is in $\ker(\pi_1(Y',q)\to \pi_1(W',q))$.  Theorem~\ref{thm:smoothIII} concludes that $\Prod_{k=1}^m [c_k\beta_{i_k}^{\epsilon_k} c_k^{-1}] \in \pi_1(Y',q)$ is represented by a knot $K''$ in $Y'$ which bounds a smoothly embedded disk in $W'$.  

Summarizing the proof so far, there is a smoothly embedded annulus $C$ in $V$ bounded by $K\subseteq Y$ and $K_0'\subseteq Y'$.  In turn $K_0'$ is homotopic in $V$ to another knot $K''\subseteq Y'$ which bounds a smoothly embedded disk in $W'$.  It remains to alter the annulus $C$ and the knot $K_0'$ to arrange that the homotopy from $K_0'$ to $K''$ lies in $Y'$.  


  Let $\nu(K)\cong K\times D^2$ be a product neighborhood of $K$ in $Y$, $\nu(C)$ be the image of $\nu(K)\times[0,1]$ in $ Y\times[0,1]\subseteq V$, and $\nu(K') = \nu(C)\cap Y'$.  Choosing these tubular neighborhoods small enough, $K'_+$ is disjoint from $\nu(K')$.  As $[K'_+]=[K'']$ in $\pi_1(V,q)$ it follows that the product $[K'{_+}]^{-1}[K'']$ is nullhomotopic in $V$.  Thus, $[K'{_+}]^{-1}[K'']\in \ker(\pi_1(V-\nu(C), q)\to \pi_1(V, q))$, which is normally generated by the homotopy class of a meridian $m(K'_0)\subseteq Y'$ of $K'_0$ based at $q$.   
Thus, we conclude that there is a product of conjugates of meridians of $K'_0$, $\delta = \Prod_{k} [d_k m(K'_0)^{\epsilon_k} d_k^{-1}]$ with each $d_k$ and embedded arc in $V-\nu(C)$, such that 
$[K'{_+}]^{-1}[K'']\delta^{-1}$
is nullhomotopic in $V-\nu(C)$.  

As $\nu(C)$ is the image of $\nu(K)\times[0,1]$ in $Y\times[0,1]\subseteq V$ and $(V,Y)$ has only 1-~and 2-handles, it follows that $(V-\nu(C),Y-\nu(K))$ has only 1-~and 2-handles.  Turning this handle structure upside-down, $(V-\nu(C), Y'-\nu(K'_0))$ has only relative 2-~and 3-handles.  In particular, $\pi_1(Y-\nu(K_0'))\to \pi_1(V-\nu(C))$ is onto.  Thus, we may assume that each $d_k$ lies in $Y'-\nu(K_0')$.  As a consequence, $\delta\in \pi_1(Y'-\nu(K_0'), q)$.  

 We now have that $[K'{_+}]^{-1} [K'']\delta^{-1}$ is in $\ker(\pi_1(Y'-\nu(K_0')) \to \pi_1(V-\nu(C)))$.   Again making use of the fact that $(V-\nu(C), Y'-\nu(K'_0))$ has no 1-handles, Proposition~\ref{prop:no 3-handles} implies that the homotopy class $[K'{_+}]^{-1} [K'']\delta^{-1}\in \pi_1(Y'-\nu(K_0'))$ contains a knot $J$ which bounds a smoothly embedded disk $D_J$ in $V-\nu(C)$.  The disk $D_J\subseteq V$ is disjoint from $C$.  We band $C$ to $D_J$ along an embedded arc in $Y'$ and see a homology concordance from $K$ to a new knot 
$K'$ with $[K'] = [K'_+][J]$ in $\pi_1(Y',q)$.  Expanding in $\pi_1(Y',q)$, 
$$
[K'] = [K'_+][J] = [K'_+]\left([K'{_+}]^{-1} [K'']\delta^{-1}\right) = [K'']\delta^{-1}
$$
 As $\delta$ is a product of conjugates of meridians of $K_0'$, $\delta$ is nullhomotopic in $Y'$.  Thus, in $\pi_1(Y',q)$, $[K'] = [K'']$.  
 
 Summarizing, there is a homology cobordism $V$ from $Y$ to $Y'$  in which the knots $K$ and $K'$ cobound a smoothly embedded annulus.  The knot $K'\subseteq Y$ is homotopic in $Y$ to a third knot $K''$,  which bounds a smoothly embedded disk in the homology ball $W'$.  This completes the proof. \end{proof}
 
 \section{The equivalence relation generated by homotopy and homology concordance}\label{sect:relative}

Next we set our eyes on Theorem~\ref{thm:relative} which concludes that two knots in possibly different homology spheres cobound an immersed annulus in a homology cobordism if and only if they are related by a sequence of homology concordances and homotopies. Recall the following notations from the introduction:
\begin{itemize}
\item $(X,K)\simeq_h (X,J)$ means $K$ and $J$ cobound an immersed annulus in the homology sphere $X$.
\item $(X,K)\simeq_c (Y,J)$ means there exists a homology cobordism from $X$ to $Y$ in which $K$ and $J$ cobound a smoothly embedded annulus.
\item $(X,K)\simeq_h^4 (Y,J)$ means there exists a homology cobordism from $X$ to $Y$ in which $K$ and $J$ cobound an immersed annulus.  
\end{itemize}
First we check that all of these are well defined under connected sum of pairs.  Recall that the connected sum $(X,K)\#(Y,J) = (X\# Y, K\# J)$ is given by picking points $p\in K$ and $q\in J$, removing small neighborhoods $\nu(p)\subseteq X$ and $\nu(q)\subseteq Y$, of $p$ and $q$, so that $K-\nu(p)$ and $J-\nu(q)$ are properly embedded arcs from a point $p_+$ to a point $p_-$ and from $q_+$ to  $q_-$.  We form the connected sum $X\# Y$ by gluing these two spherical boundaries together along an orientation reversing diffeomorphism which sends $p_+$ to $q_-$ and $p_-$ to $q_+$.  The connected sum $K\# J$ is given by gluing together $K-\nu(p)$ and $J-\nu(q)$.

\begin{proposition}\label{prop: well defined}
Suppose that $(Y,K)$, $(Y',K')$, and $(X,J)$ are knots in homology spheres.  For any $\simeq\in\{\simeq_h, \simeq_c,\simeq_h^4\}$ if $(Y,K)\simeq (Y',K')$ then $(Y,K)\#(X,J)\simeq (Y',K')\#(X,J)$.  (In the case of $\simeq_h$ we assume $Y=Y'$.)
\end{proposition}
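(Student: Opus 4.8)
The plan is to realize the connected sum with $(X,J)$ as a single local modification carried out along a vertical arc of the annulus witnessing $(Y,K)\simeq (Y',K')$, and then to check that this modification respects both the annulus and the homology-cobordism structure. In each of the three cases the hypothesis supplies an annulus $C$ --- embedded for $\simeq_c$, possibly immersed for $\simeq_h$ and $\simeq_h^4$ --- with $\bdry C = K\sqcup K'$, sitting inside an ambient object $N$: for $\simeq_c$ and $\simeq_h^4$ this $N$ is a smooth homology cobordism $Z$ from $Y$ to $Y'$, while for $\simeq_h$ (where $Y=Y'$) it is the homology sphere $Y$ itself. Using the product structure $C\cong S^1\times[0,1]$ I would fix an arc $\gamma=\{x_0\}\times[0,1]\subseteq C$ running from a point $p\in K$ to a point $p'\in K'$, and after a small isotopy assume that $\gamma$ is embedded and that $C$ is embedded near $\gamma$ (when $C$ has self-intersections they can be pushed off of $\gamma$).

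For the two four-dimensional relations I would form the \emph{connected sum along arcs} of the pair $(Z,C)$ with the product pair $(X\times[0,1],\ J\times[0,1])$, joining $\gamma$ to the vertical arc $\{q\}\times[0,1]$ for a chosen $q\in J$. Concretely, remove open tubular neighborhoods $\nu(\gamma)\cong D^3\times[0,1]$ and $\nu(\{q\}\times[0,1])$ and glue the two resulting manifolds along their spherical side boundaries $S^2\times[0,1]$; call the result $\widehat Z$. Cutting $C$ along $\gamma$ turns it into a rectangle, and likewise for $J\times[0,1]$, and the gluing reassembles these two rectangles into a single annulus $\widehat C\subseteq\widehat Z$ whose two boundary circles are exactly $K\#J$ and $K'\#J$. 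Here the choice of $p'$ as the \emph{far} endpoint of $\gamma$ is what aligns the two connected-sum points. The annulus $\widehat C$ is embedded when $C$ is and immersed when $C$ is, so this single construction treats $\simeq_c$ and $\simeq_h^4$ at once, once the homology computation below is in place.

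The case $\simeq_h$ is the same construction one dimension lower: I would connect-sum $(X,J)$ into a neighborhood of the embedded arc $\gamma\subseteq Y$ and tube the inserted copy of $J$ along $\gamma$, equivalently drag the local $J$-summand of $K\#J$ along the track of $\gamma$ as the homotopy $C$ carries $K$ to $K'$. This yields an immersed annulus in $Y\#X$ from $K\#J$ to $K'\#J$, giving $(Y,K)\#(X,J)\simeq_h(Y',K')\#(X,J)$.

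It remains to verify that $\widehat Z$ is a smooth homology cobordism from $Y\#X$ to $Y'\#X$; its two ends are the stated connected sums of boundaries by construction, so this is where I expect the only genuine work to lie. Writing $\widehat Z = A\cup_{S^2\times[0,1]} B$ with $A=Z\setminus\nu(\gamma)$ and $B=(X\times[0,1])\setminus\nu(\{q\}\times[0,1])$, I would run a Mayer--Vietoris argument, using that the overlap $S^2\times[0,1]$ is a homology $S^2$, that $X\times[0,1]$ is a homology cobordism over the homology sphere $X$, and that $Z$ is a homology cobordism by hypothesis; the long exact sequence then forces $H_*(\widehat Z,\ Y\#X)=0$ and, symmetrically, $H_*(\widehat Z,\ Y'\#X)=0$. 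The point I expect to be the main obstacle --- and the one to state with care --- is the bookkeeping that keeps the two attaching points compatible, routing the connected sum along the single arc $\gamma$ so that the reassembled annulus closes up to precisely $K\#J$ and $K'\#J$: in a non-simply-connected summand the connected-sum knot genuinely depends on where $J$ is attached, and without the arc $\gamma$ transporting the attaching point of $K$ to that of $K'$ one would instead see $K'\#J$ with $J$ slid around $K'$.
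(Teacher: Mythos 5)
Your proposal is correct and follows essentially the same route as the paper: choose an arc in the annulus from $K$ to $K'$ (avoiding self-intersections in the immersed case), excise its neighborhood, glue in $(X-\nu(q))\times[0,1]$ along $S^2\times[0,1]$, and reassemble the two cut-open annuli into one cobounded by $K\#J$ and $K'\#J$; your Mayer--Vietoris check simply makes explicit a verification the paper leaves implicit. The only cosmetic difference is in the $\simeq_h$ case, where the paper normalizes the homotopy to be constant near a point of $K$ and sums there, while you drag the $J$-summand along the track of the basepoint --- both amount to the same general-position argument.
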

\begin{proof}
The proofs in the cases of $\simeq_c$ and $\simeq_h^4$ are nearly identical. Suppose that $(Y,K)$, $(Y',K')$, and $(X,J)$ are knots in homology spheres. Let $W$ be a homology cobordism from $Y$ to $Y'$ in which $A$ is an embedded (or immersed in the case of $\simeq_h^4$) annulus bounded by $K$ and $K'$.  Let $\alpha$ be an embedded curve in $A$ running from a point on $K$ to a point on $K'$.  If $A$ is immersed, then $\alpha$ is chosen to be disjoint from all self-intersection points of $A$.  Let $p$ be a point on $J$.  Glue together $W-\nu(\alpha)$ and $(X-\nu(p))\times[0,1]$ to get a homology cobordism $V$ from $Y\# X$ to $Y'\# X$.  Let $ A'$ be the result of gluing $A-\nu(\alpha)$ to $(J-\nu(p))\times[0,1]$ in this homology cobordism.  Then $A'$ is an embedded (or immersed if $A$ is immersed) annulus bounded by $K\#J$ and $K'\#J$. Thus, if $(Y,K)\simeq_c (Y',K')$ then $(Y\#X, K\#J)\simeq_c (Y'\#X, K'\#J)$ and if $(Y,K)\simeq_h^4 (Y',K')$ then $(Y\#X, K\#J)\simeq_h^4 (Y'\#X, K'\#J)$.

Suppose that $(Y,K)\simeq_h(Y',K')$ so that $Y=Y'$ and $K$ is homotopic to $K'$ in $Y$.   We may assume that the homotopy is constant on a small neighborhood of some point $p$ of $K$.  Using this as the point we use in the connected sum construction and using the constant homotopy on $J\subseteq Y$, we see that $K\#J$ and $K'\#J$ are homotopic in $Y\#X$, completing the proof.
\end{proof}

\begin{reptheorem}{thm:relative}
Let $Y$ and $X$ be homology spheres.  Let $K$ and $J$ be knots in $X$ and $Y$ respectively.  If there exists a smooth homology cobordism from $Y$ to $X$ in which $K$ is homotopic to $J$ then there exist knots $K'$ and $J'$ in some homology sphere $Z$ such that $$(Y,K)\simeq_c(Z,K')\simeq_h(Z,J')\simeq_c (X,J).$$  
\end{reptheorem}

\begin{proof}[Proof of Theorem~\ref{thm:relative}]
Suppose that $(X,K)$ and $(Y,J)$ are knots in homology spheres and that $(X,K)\simeq_h^4 (Y,J)$.  
%
By Proposition~\ref{prop: well defined}, $(X\#-Y,K\#-J)\simeq_h^4 (Y\#-Y,J\#-J)$.  Here $-Y$ indicates the orientation reverse of $Y$ and $-J$ the orientation reverse of $J$.  Just as for knots in $S^3$, $J\#-J$ bounds a smoothly embedded disk in a homology ball bounded by $Y\#-Y$.  Stack a homology cobordism $X\#-Y$ to $Y\#-Y$ on top of a homology ball bounded by $Y\#-Y$. This gives a homology ball bounded by $X\#-Y$.  In this homology cobordism stack a homotopy from $(X\#-Y,K\#-J)$ to $(Y\#-Y,J\#-J)$ on top of a slice disk for $J\#-J$ to see an immersed disk.  Thus, $K\#-J$ is nullhomotopic in a homology ball.   Theorem~\ref{thm:smoothII} now applies and concludes that there exists another homology sphere $Z_0$ and knots $L$ and $L'$ in $Z_0$ such that 
$$
(X\#-Y,K\#-J)\simeq_c(Z_0,L)\simeq_h(Z_0,L')\simeq_c(S^3,U)
$$ 
where $U$ is the unknot in $S^3$.  By Proposition~\ref{prop: well defined}, we may take the connected sum of each of these terms with $(Y,J)$ to get
$$
((X\#-Y)\#Y,(K\#-J)\#J)\simeq_c(Z_0\#Y,L\# J)\simeq_h(Z_0\#Y,L'\#J)\simeq_c(Y,J)
$$ 
This together with the observation that ${(X,K)\simeq_c(X\#(-Y\#Y),K\#(-J\#J))}$ completes the proof. 
\end{proof}

\bibliographystyle{plain}

\bibliography{biblio}

\end{document}